\theoremstyle{plain}
\newtheorem{thm}{Theorem}[section]
\newtheorem{cor}[thm]{Corollary}
\newtheorem{lem}[thm]{Lemma}
\newtheorem{prop}[thm]{Proposition}
\theoremstyle{remark}
\numberwithin{equation}{section}
\title{Rationality of N\'eron--Tate height over function fields}
\author{Chengyuan Yang}
\date{\today}
\begin{document}
	\maketitle
	\tableofcontents
	\section{Introduction}
	Let $K$ be the function field of a geometrically integral normal projective curve $S$ over $k$, $A$ an abelian variety with an ample symmetric line bundle $L$ on $A$. We prove that the N\'eron--Tate height $\hat{h}_L(Z)$ is a rational number for any closed subvariety $Z$ of $A$. Height functions characterize the complexity of subvarieties. Originally, height is defined on points of $\mathbb{P}^n$. The classical height is the map $h:\mathbb{P}^n(\overline{K})\to\mathbb{R}$ given by
	\begin{equation*}
		h(x_0, \ldots, x_n)=\frac{1}{[K':K]}\sum\limits_{v\in M_{K'}}{\rm log} {\rm max}\{|x_0|_v, \ldots|x_n|_v\}
	\end{equation*}
	for $K'$ a finite extension of $K$ that contains a homogeneous coordinate of the point $(x_0,\ldots, x_n)$. The valuation $|\cdot|_v$ is normalized by $|t|_v=e^{-[k(v):k]{\rm ord}_v(t)}$. Fix a projective variety $X$ over $K$ and an ample line bundle $L$ on $X$. Then, for some positive integer $e$, $L^{\otimes e}$ is very ample, which gives an embedding $\varphi: X\to\mathbb{P}^n$. Define the \emph{Weil height} by $h_L(x)=\frac1eh(\varphi(x))$. Note that Weil height depends on the choice of $\varphi$. Different choices of $\varphi$ change the Weil height by a bounded function. 
	
	When $X=A$ is an abelian variety, we are able to define a height that does not depend on such choice. For simplicity, we assume that $L$ is symmetric (i.e. $L\cong[-1]^*L$). The \textit{N\'eron--Tate height}, or \textit{canonical height}, is defined by Tate's limit argument
	$$\hat{h}_L(x)=\lim\limits_{n\to+\infty}4^{-n}h_L(2^nx),$$
	where $h_L$ can be taken as any Weil height of $L$. The canonical height does not depend on the choice of $h_L$. N\'eron--Tate height is defined by taking limit, so it is surprising that it is still a rational number by the N\'eron function \cite[Th\'eor\`eme 4]{neron1965quasi} or the Moret-Bailly model \cite[III.3.3]{moret1985pinceaux}. In this paper, we generalize this result to heights of subvarieties of arbitrary dimension. 
	\begin{thm}\label{MT}
		Let $A$ be an abelian variety over a function field $K$, $L$ a symmetric ample line bundle on $A$, and $Z$ a closed subvariety of $A$. Then the N\'eron--Tate height $\hat{h}_L(Z)$ is a rational number. 
	\end{thm}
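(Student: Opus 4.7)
The plan is to compute $\hat{h}_L(Z)$ as an honest intersection number on a well-chosen integral model of $A$, at which point rationality becomes manifest. The key input is a Moret-Bailly cubical model of $(A,L)$.

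Two preliminary reductions are harmless. Replacing $K$ by a finite extension $K'/K$ corresponding to a cover $S'\to S$ rescales the N\'eron--Tate height by a positive integer and so preserves rationality; after such a base change I may assume $A$ has everywhere semi-abelian reduction over $S$. Invoking Moret-Bailly \cite[III.3.3]{moret1985pinceaux} (possibly after a further finite extension), I then produce a projective integral model $\pi:\mathcal{A}\to S$ of $A$ together with an ample line bundle $\mathcal{L}$ on $\mathcal{A}$ extending $L$, such that the doubling map $[2]$ extends to an endomorphism of $\mathcal{A}$ satisfying the cubical identity $[2]^{*}\mathcal{L}\cong \mathcal{L}^{\otimes 4}$ globally on $\mathcal{A}$. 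For a closed subvariety $Z\subset A$ of dimension $d$, with Zariski closure $\overline{Z}\subset \mathcal{A}$, I set
$$h_{\mathcal{L}}(Z):=\frac{c_{1}(\mathcal{L})^{d+1}\cdot [\overline{Z}]}{(d+1)\,\deg_{L}Z};$$
the numerator is an intersection number on a projective $k$-scheme, hence an integer, and the denominator is a positive rational, so $h_{\mathcal{L}}(Z)\in \mathbb{Q}$.

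It then remains to identify $h_{\mathcal{L}}(Z)$ with $\hat{h}_{L}(Z)$. The projection formula applied to the finite map $[2]:\overline{Z}\to \overline{[2]Z}$, combined with $[2]^{*}\mathcal{L}\cong \mathcal{L}^{\otimes 4}$ and the standard identity $\deg_{L}([2]Z)\cdot \deg([2]|_{Z})=4^{d}\deg_{L}(Z)$, yields the \emph{exact} functional equation $h_{\mathcal{L}}([2]Z)=4\,h_{\mathcal{L}}(Z)$. Since $h_{\mathcal{L}}$ is the Weil height attached to a projective embedding of $\mathcal{A}$ via a very ample power of $\mathcal{L}$, any other Weil height of $L$ differs from $h_{\mathcal{L}}$ by a function bounded along the orbit $\{[2^{n}]Z\}_{n\ge 0}$, so Tate's averaging on both sides gives $\hat{h}_{L}(Z)=\lim_{n}4^{-n}h_{L}([2^{n}]Z)=h_{\mathcal{L}}(Z)\in \mathbb{Q}$.

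The main obstacle is the existence of the projective cubical model. Moret-Bailly's construction naturally produces a cubical line bundle on the connected N\'eron model, which is semi-abelian and in general not projective when $A$ has bad reduction; promoting this datum to an ample line bundle on a projective compactification of the semi-abelian model while preserving the cubical identity is the technical heart of the argument and typically requires a carefully chosen toroidal compactification. A secondary subtlety, replacing the familiar pointwise boundedness used in Tate's limit for points, is the comparison of two intersection-theoretic heights of cycles coming from different models of $(A,L)$; one needs this comparison to be bounded independently of the iterates $[2^{n}]Z$ in order to pass to the limit.
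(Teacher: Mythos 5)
There is a genuine gap, and it sits exactly where you placed your ``main obstacle'': the projective cubical model you need does not exist in general. If there were a projective model $\mathcal{A}/S$ with an ample $\mathcal{L}$ extending $L$, an extension of $[2]$ to $\mathcal{A}$, and an isomorphism $[2]^{*}\mathcal{L}\cong\mathcal{L}^{\otimes 4}$ over all of $\mathcal{A}$, then at every place $v$ the associated model metric would satisfy the functional equation $\|s(2x)\|=\|s(x)\|^{4}$ (up to the harmless unit coming from the choice of isomorphism), hence by Zhang's uniqueness it would \emph{be} the canonical metric. But at a place of degenerate (multiplicative) reduction the canonical metric is not a model metric: by Gubler's theorem the Chambert-Loir measure $c_1(\hat L)_v^{\dim A}$ is (a multiple of) Lebesgue measure on the positive-dimensional skeleton $\mathbb{R}^n/\Gamma$, whereas the Chambert-Loir measure of any model metric is a finite sum of point masses at divisorial points. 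So after your reduction to semi-abelian reduction, the model you invoke exists only when all reductions are good. Moret-Bailly's cubical line bundle lives on the semi-abelian N\'eron model, which is not proper, and no toroidal compactification carries an extension of $[2]$ together with the exact cubical identity in the bad-reduction case; this is not a technical refinement one can defer, it is the entire difficulty. That is precisely why the paper does not argue via a single model: it writes $\hat h_L(Z)$ as an adelic intersection number, applies the Chambert-Loir--Thuillier induction formula to reduce to local integrals $\int_{Z_v^{\rm an}}-\log\|s\|_{\hat L}\,c_1(\hat L)_v^{d}\delta_{Z_v}$, and proves rationality of these by combining Gubler's description of the canonical measure (rational coefficients on rational simplices, Lemma \ref{CLA}) with an analysis of the canonical metric on the skeleton via Raynaud uniformization and the functional equation (Propositions \ref{S} and \ref{MetricSkelton}). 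Your proposal assumes away exactly the non-model local contribution that this analysis is designed to handle.

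Your secondary worry is real but comparatively minor: for cycles, the difference of normalized heights attached to two projective models of $(A,L^{\otimes e})$ is bounded uniformly in $Z$ (with the normalization by $(\dim Z+1)\deg_L Z$), so Tate's limit argument for subvarieties does go through once one has a height satisfying the exact relation $h([2]Z)=4h(Z)$; also the preliminary finite base change only rescales the height by a positive rational. So the skeleton of your argument would be fine if the cubical projective model existed; since it does not, the approach fails at places of bad reduction, which is the only case where Theorem \ref{MT} is not already classical.
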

	Robin de Jong \cite[Corollary 1.2]{de2018neron} has shown that N\'eron--Tate heights of certain tautological cycles on jacobians are contained in a $3$-dimensional $\mathbb{Q}$-linear subspace of $\mathbb{R}$ in number field case. De Jong's proof can be transferred to function field case, showing that these heights are rational numbers. De Jong and Farbod Shokrieh \cite[Section 1.6]{de2022faltings} have also shown that, in function field case, the N\'eron-Tate height of a symmetric theta divisor is a rational number when the abelian variety is principally polarized. We prove that in function field case, N\'eron--Tate heights are always rational numbers. This explains why the coefficients in \cite[Theorem 1.1]{de2018neron} are rational numbers. 
	
	The N\'eron--Tate height for subvarieties is defined as follows. A projective model $(\mathcal{A},\mathcal{L})$ of $(A,L)$ is an integral projective scheme $\mathcal{A}$ over $S$ with generic fiber $A$ and a line bundle $\mathcal{L}$ on $\mathcal{A}$ such that $\mathcal{L}_K\cong L$. For any closed subvariety $Z$ of $A$, the naive height with respect to the projective model $(\mathcal{A},\mathcal{L})$ is the intersection number
	\begin{equation*}
		h_{\mathcal{L}}(Z):=\frac{\mathcal{L}^{{\rm dim}(Z)+1}\cdot\mathcal{Z}}{(1+{\rm dim}(Z))){\rm deg}_L(Z)},
	\end{equation*}
	where $\mathcal{Z}$ is the Zariski closure of $Z$ in $\mathcal{A}$, and ${\rm deg}_L(Z)$ is the top self-intersection number of $L$ on $Z$. Let $[n]:A\to A$ be the morphism of multiplication by $n\in\mathbb{Z}$. The \textit{N\'eron--Tate height} of $Z$ is the limit 
	\begin{equation*}
		\hat{h}_L(Z)=\lim\limits_{n\to+\infty}4^{-n}h_{\mathcal{L}}([2^n]_*Z).
	\end{equation*}	
	This definition is independent of the choice of the projective model. 

	The proof is by induction on the dimension of the subvariety. Fix a rigidification on $L$. We describe the N\'eron--Tate height using adelic intersection numbers \cite[3.1]{zhang1995small}
	\begin{equation*}
		\hat{h}_L(Z)=\frac{\langle\hat{L}^{{\rm dim}(Z)+1}|Z\rangle}{({\rm dim}(Z)+1){\rm deg}_L(Z)},
	\end{equation*}
	where $\hat{L}$ is an adelic line bundle extending $L$ depending on the rigidification. By the result of Chambert-Loir and Thuillier \cite[Th{\'e}or{\`e}me 4.1]{chambert2009mesures}, for a non-zero rational section $s$ of $\hat{L}$, we have the induction formula 
	\begin{equation*}
		\langle\hat{L}^{{\rm dim}(Z)+1}|Z\rangle=\langle\hat{L}^{{\rm dim}(Z)}|Z\cap{\rm div}(s)\rangle+\sum\limits_{v\in S}\int_{Z_v^{\rm an}}-{\rm log}\|s\|_{\hat{L}}c_1(\hat{L})_v^{{\rm dim}(Z)}\delta_{Z_v}.
	\end{equation*}
	Note that the sum in the right-hand side is a finite sum. Thus, we need to prove that local integrals are a rational numbers. 
	
	The Chambert-Loir measure $c_1(\hat{L})_v^{{\rm dim}(Z)}\delta_{Z_v}$ is supported on the skeleton of $Z_v$, which is a $\mathbb{Q}$-piecewise linear space by \cite[Theorem 5.1.1]{berkovich2004smooth} and \cite[Theorem 6.12]{gubler2010non}. Walter Gubler \cite[Theorem 1.1]{gubler2010non} has shown that the measure $c_1(\hat{L})_v^{{\rm dim}(Z)}\delta_{Z_v}$ is a linear combination of Lebesgue measures of rational simplices compatible with the piecewise linear structure. It is easy to check that the linear combination has $\mathbb{Q}$-coefficients.
	
	By \cite[Theorem 5.1.1]{berkovich2004smooth}, all meromorphic functions are compatible with the piecewise linear space in the sense that for a meromorphic function $f$ on $Z_v^{\rm an}$, the morphism $Z_v^{\rm an}\to\mathbb{R}: x\to-{\rm log}|f(x)|$ is a $\mathbb{Q}$-piecewise linear function on the skeleton. Thus, for a model metrized line bundle $\overline{M}$ on $Z_v$ with a rational section $f$, the integral of $-{\rm log}\|f\|_{\overline{M}}$ on a rational simplex in the skeleton is a rational number. However, in general, the metrized line bundle $\hat{L}|_{Z_v}$ does not come from a model. 
	
	To treat the canonical metric, we use the functional equation for any non-zero rational section $s$ of $L$
	\begin{equation*}
		-4{\rm log}\|s(x)\|_{\hat{L}}+{\rm log}\|s(2x)\|_{\hat{L}}=-{\rm log}|h(x)|,
	\end{equation*}
	where $h:=s^{\otimes 4}\otimes[2]^*s^{\otimes -1}$ is a rational function on $A$ and $x$ is a point in $A^{\rm an}$. By \cite[Proposition 11.1.4]{lang2013fundamentals}, the functional equation determines a unique metric. We construct a metric following the functional equation by a model metric and a tropical term. This generalizes the result in \cite[Proposition 9.1]{de2022faltings} where de Jong and Shokrieh treat the case when $L$ determines a principal polarization. 
	
	In section \ref{MLB}, we review the general theory about metrics on line bundles, including adelic line bundles and Chambert-Loir measure. In section \ref{A}, we review the uniformization theory of abelian varieties. The proof of Theorem \ref{MT} is in section \ref{PF}. \\
	
	\noindent\textbf{Acknowledgments.} I would like to express my deep gratitude to my advisor, Professor Xinyi Yuan, for his constant encouragement and guidance throughout my study. I am grateful to Walter Gubler and Robin de Jong, since the article is greatly inspired by their works and communications with them. I would also like to thank my classmates Ruoyi Guo, Lai Shang, and She Yang for some useful discussions.

	\section{Metrized line bundles}\label{MLB}
	In this section, We review the theory of adelic line bundles and metrized line bundles introduced in Zhang \cite{zhang1995small} and the induction formula proved in Chambert-Loir--Thuillier \cite{chambert2009mesures}. Note that we use the terminology introduced in Yuan--Zhang \cite{yuan2021adelic}. 
	\subsection{Berkovich spaces}
	Let $K$ be a discrete non-archimedean valued field, $R$ the valuation ring of $K$. For a variety $X$ over $K$, we construct an analytic space $X^{\rm an}$. 
	
	For the case $X={\rm Spec}(A)$, the set of points in $X^{\rm an}$ is defined as $\mathcal{M}(A)$, which is the set of all multiplicative semi-norms extending the norm on $K$. For any $x\in\mathcal{M}(A)$, $f\in A$, let $|f(x)|=|f|_x$ where $|\cdot|_x$ is the semi-norm associated to $x$. The topology is the weakest one such that $|f|:\mathcal{M}(A)\to \mathbb{R}$ is continuous for all $f\in A$. 
	
	For the general case, choose a finite open affine cover ${\rm Spec}(A_i)$ of $X$. Then $X^{\rm an}$ is defined by gluing together $\mathcal{M}(A_i)$ in a natural way. A subset in $X^{\rm an}$ is open if and only if its intersection with $\mathcal{M}(A_i)$ is open in $\mathcal{M}(A_i)$ for all $i$. 
	
	Write $S={\rm Spec}(R)$. A projective model of $X$ over $S$ is a projective, integral scheme over $S$ whose generic fiber is $X$. Suppose we have a projective model $\mathcal{X}$ of $X$ over $S$. Then there is a reduction map ${\rm red}: X^{\rm an}\to \mathcal{X}$ defined as follows. For any $x\in X^{\rm an}$, let $H_x$ be the residue field of $x$, $R_x$ the valuation ring of $H_x$. By the valuative criterion of properness, the $K$-morphism ${\rm Spec}(H_x)\to X$ extends to a unique $S$-morphism ${\rm Spec}(R_x)\to \mathcal{X}$. Let ${\rm red}(x)$ be the image of the closed point of ${\rm Spec}(R_x)$. Note that the image of ${\rm red}$ is contained in the special fiber of $\mathcal{X}$. 
	\subsection{Metrics}\label{M}
	Let $K$ be a discrete non-archimedean valued field, $R$ the valuation ring of $K$. For a projective variety $X$ over $K$, let $X^{\rm an}$ be the Berkovich space associate to $X$. A \emph{metrized line bundle} on $X$ is a line bundle $L$ on $X$ with a metric $\|\cdot\|$ on $X^{\rm an}$ such that for any open subset $U$ of $X^{\rm an}$ with a section $s\in\Gamma(L,U)$, $\|s\|$ gives a continuous function $U\to\mathbb{R}_{\geq 0}$, $\|fs\|=\|s\|\cdot|f|$ for all analytic functions $f$ on $U$, and $\|s(x)\|$ does not vanish if $s$ does not vanish at $x$.
	
	Let $\mathcal{L}$ be a line bundle on $\mathcal{X}$ with generic fiber $L$. For a point $x\in X^{\rm an}$, consider an open neighborhood $\mathcal{U}$ of ${\rm red}(x)$ in $\mathcal{X}$ such that $\mathcal{L}$ is trivialized on $\mathcal{U}$ and write $U=\mathcal{U}_K$. Choose a trivialization $t\in\mathcal{L}(\mathcal{U})$. For any $s\in L(U)$, define $\|s(x)\|_\mathcal{L}=|f(x)|$ where $f$ is the unique rational function on $U$ such that $f\cdot t=s$. Note that different choice of $t$ differs by a unit in $R_x$, and thus $\|\cdot\|_\mathcal{L}$ does not depend on the choice of $t$. The metric $\|\cdot\|_\mathcal{L}$ is called the model metric on $L$ associated to the model $(\mathcal{X},\mathcal{L})$. A metric $\|\cdot\|$ on $L$ is called a \textit{model metric} if it comes from a projective model $(\mathcal{X},\mathcal{L})$ of $L^{\otimes m}$ for some positive integer $m$. A line bundle is called \textit{nef} if it has non-negative degree on any curve in the special fiber, and a model metric is called \textit{nef} if it comes from a nef model. A metric is called \textit{nef} if it is a uniform limit of nef model metrics. A metrized line bundle $\overline{L}$ is called \textit{integrable} if it is the difference of two nef metrized line bundle. 
	
	Let $A$ be an abelian variety over $K$. For any rigidified symmetric ample line bundle $L$ on $A$, there is a canonical isomorphism $\varphi_m: [m]^*L\stackrel{\sim}{\to} L^{\otimes m^2}$. By \cite[Theorem 2.2]{zhang1995small}, there is a unique metrized line bundle $\overline{L}$ extending $L$ such that $[2]^*\overline{L}\stackrel{\sim}{\to}\overline{L}^{\otimes4}$ is an isometry. This metric is called the \textit{canonical metric} of $L$. Write $\|\cdot\|_L$ for the canonical metric of $L$. When $A$ has good reduction, the canonical metric of $L$ is a model metric. In general, the canonical metric of an ample line bundle is always integrable.
	\subsection{Adelic line bundles}\label{ALB}
	Let $K$ be the function field of a projective normal curve $S$ over $k$. For each $v\in S$, we choose the non-archimedean norm on $K$ defined as follows. For any $f\in K$, let $|f|_v=e^{-[k_v:k]{\rm ord}_v(f)}$ where $k_v$ is the residue field of $v$. This choice of valuation satisfies the product formula. Let $K_v$ be the completion of $K$ under the ultrametric $|\cdot|_v$. 
	
	Let $X$ be a projective variety over $K$. An adelic line bundle is a line bundle $L$ and a collection of continuous metrics $\|\cdot\|_v$ on $X_v:=X\times_{{\rm Spec}(K)}{\rm Spec}(K_v)$ for all $v\in S$ such that there is an open subset $U$ of $S$ with a projective model $\mathcal{X}$ of $X$ over $U$ and a line bundle $\mathcal{L}$ on $\mathcal{X}$ satisfying for all $v\in U$, $\|\cdot\|_v$ comes from the model $(\mathcal{X}_v, \mathcal{L}_v):=(\mathcal{X}\times_U{\rm Spec}(K_v), \mathcal{L}\times_U{\rm Spec}(X_v))$. A model adelic line bundle is an adelic line bundle $\overline{L}$ such that $\overline{L}^m$ comes from a projective model for some positive integer $m$. 
	
	Let $\|\cdot\|_n:n\in\mathbb{Z}_+$ be a sequence of adelic metrics on $L$, we say that $\|\cdot\|_n$ converge to $\|\cdot\|_n$ if for all but finite many $v\in S$, $\|\cdot\|_{n,v}=\|\cdot\|_v$ for all $n$, and for other $v\in S$, $\|\cdot\|_{n,v}$ converge uniformly to $\|\cdot\|_v$. A model adelic line bundle is called \textit{nef} if it comes from a model $(\mathcal{X}, \mathcal{L})$ with $\mathcal{L}$ nef (i.e. has non-negative degree on any curve in special fibers). An adelic line bundle is called \textit{nef} if it is the limit of a sequence of nef model adelic line bundles, called \textit{integrable} if it is the difference of two nef adelic line bundles. 
	
	By \cite[Theorem 1.4]{zhang1995small}, for integrable adelic line bundles $\overline{L}_1,\overline{L}_2, \ldots, \overline{L}_n$ where $n={\rm dim}(X)+1$, there is a intersection number $\langle\overline{L}_1, \overline{L}_2, \ldots, \overline{L}_n\rangle\in\mathbb{R}$ defined as follows. When all adelic line bundles are nef, suppose $\overline{L}_i$ is the limit of nef model adelic line bundles $\lim\limits_{j\to\infty}\overline{\mathcal{L}}_{i,j}^{\frac{1}{e_{i,j}}}$ where $\overline{\mathcal{L}}_{i,j}$ comes from a projective model $(\mathcal{X}_{i,j}, \mathcal{L}_{i,j})$, and $e_{i,j}$ is a positive integer. The generic fiber of $\mathcal{X}_{1,j}\times_S\mathcal{X}_{2,j}\times_S\ldots\times_S\mathcal{X}_{n,j}$ is $X\times_{{\rm Spec}(K)}X\times_{{\rm Spec}(K)}\ldots\times_{{\rm Spec}(K)}X$. Let $\Delta$ be the image of $X$ under the diagonal map to the generic fiber, and let $\mathcal{X}_j$ be the Zariski closure of $\Delta$ in $\mathcal{X}_{1,j}\times_S\mathcal{X}_{2,j}\times_S\ldots\times_S\mathcal{X}_{n,j}$. We have maps $p_{i,j}: \mathcal{X}_j\to\mathcal{X}_{i,j}$ which is the composition of the immersion and the projection map. Then, $\overline{\mathcal{L}}_{i,j}^{e_{i,j}}$ comes from the projective model $(\mathcal{X}_j, p_{i,j}^*\mathcal{L}_{i,j})$. The intersection number $\langle\overline{L}_1, \overline{L}_2, \ldots, \overline{L}_n\rangle$ is the limit $\lim\limits_{j\to\infty}\frac{\langle p_{1,j}^*\overline{\mathcal{L}}_{1,j}, p_{2,j}^*\overline{\mathcal{L}}_{2,j}, \ldots, p_{n,j}^*\overline{\mathcal{L}}_{n,j} \rangle}{e_{1,j}e_{2,j}\cdots e_{n,j}}$ where the numerator is the algebraic intersection number on $\mathcal{X}_j$. Extending by linearity and we get the case for integrable adelic line bundles. 
	
	Let $A$ be an abelian variety over $K$, with a rigidified symmetric ample line bundle $L$ on it. Let $\hat{L}$ be the line bundle $L$ endowed with canonical metrics at all places. Choose a flat projective model $(\mathcal{A},\mathcal{L})$ of $(A,L^{\otimes m})$ over $S$ such that $\mathcal{L}$ is relatively ample. There is an open subset $U\in S$ such that the morphism $[2]$ and the isomorphism $[2]^*L\stackrel{\sim}{\to} L^{\otimes4}$ extend to the morphism $[2]_U:\mathcal{A}_U\to\mathcal{A}_U$ and the isomorphism $[2]^*_U\mathcal(L)_U\stackrel{\sim}{\to}\mathcal(L)_U^{\otimes 4}$. Let $f_n: \mathcal{X}_n\to\mathcal{X}$ be the normalization of the composition of morphisms
	\begin{equation*}
		\mathcal{X}_U\stackrel{[2]^n_U}{\longrightarrow}\mathcal{X}_U\longrightarrow\mathcal{X}.
	\end{equation*}
	Let $\|\cdot\|_{\mathcal{L}_n}$ be the metric on $L$ associated to the projective model $(\mathcal{X}_n,f_n^*\mathcal{L})$, then $\|\cdot\|_{\mathcal{L}_n}$ is a converge uniformly to $\|\cdot\|_{\hat{L}}$. Since $f_n^*\mathcal{L}$ is relatively ample, and hence nef, $\hat{L}$ is an integrable adelic line bundle. This also implies that the canonical metric in the local case is always integrable. 
	
	For a closed subvariety $Z$ of dimension $d$ in $A$, we redefine the \textit{N\'eron--Tate height} by the intersection number
	\begin{equation*}
		\hat{h}_L(Z)=\frac{\langle\hat{L},\ldots,\hat{L}|Z\rangle}{(d+1){\rm deg}_L(Z)}. 
	\end{equation*}
	As discussed in \cite[Section 1.1]{xie2022geometric}, this definition is the same as the one defined in the introduction. 
	
	\subsection{Chambert-Loir measure}
	We use terminologies and assumptions in section \ref{ALB}. Fix a place $v\in S$, $K_v$ is a discrete non-archimedean valued field, and $X_v$ is a projective variety. In \cite{chambert2006mesures}, Chambert-Loir constructed a measure which follows the induction formula over $K_v$. By \cite[Th\'eor\`eme 4.1]{chambert2009mesures}, for any integrable metrized line bundles $\overline{L}_1, \ldots, \overline{L}_n$ on $X_v$ there is a regular Borel measure $c_1(\overline{L}_1)c_1(\overline{L}_2)\ldots c_1(\overline{L}_n)$ on $X$, such that for any integrable metrized line bundle $\overline{L}_{n+1}$ and rational sections $s_1, s_2, \ldots, s_{n+1}$ of $\overline{L}_1, \overline{L}_2, \ldots, \overline{L}_{n+1}$ respectively, we have the induction formula
	\begin{equation*}
		\langle\widehat{\rm div}(s_1)\ldots\widehat{\rm div}(s_{d+1})\rangle=\langle\widehat{\rm div}(s_1)\ldots\widehat{\rm div}(s_d) \rangle|{\rm div}(s_{d+1})-\int_{X^{\rm an}}\|s_{d+1}\|_{\overline{L}_{d+1}} c_1(\overline{L}_1) \ldots c_1(\overline{L}_d).
	\end{equation*} 
	
	Chambert-Loir measure gives the induction formula for the global case as well. For instance, for any integrable adelic line bundles $\overline{L}_1, \overline{L}_2, \ldots, \overline{L}_{n+1}$, and a rational section $s$ of $\overline{L}_{n+1}$, with $Z:={\rm div}(s)$, the equation 
	\begin{equation}\label{Induction}
		\langle\overline{L}_1, \overline{L}_2, \ldots, 	\overline{L}_{n+1}\rangle=\langle\overline{L}_1, \overline{L}_2, \ldots, \overline{L}_n|Z\rangle+\sum\limits_{v\in S}\int_{X_v^{\rm an}}-{\rm log}\|s\|_{L_{n+1}}c_1(\overline{L}_1)_v\ldots c_1(\overline{L}_n)_v
	\end{equation}
	holds. 
	
	As in section \ref{ALB}, there is a projective model $\mathcal{X}$ with line bundles $\mathcal{L}_1, \mathcal{L}_2, \ldots, \mathcal{L}_{n+1}$ and positive integers $e_1, e_2, \ldots, e_{n+1}$ such that $\overline{L}_i^{e_i}$ differs from $\mathcal{L}_i$ on only finite many places. Since the induction formula for algebraic intersection number is a finite sum, the right-hand side of equation (\ref{Induction}) is a finite sum. 
	\section{Abelian varieties}\label{A}
	The goal of this section is to study the rationality of the canonical metric and the canonical measure. Our method is the theory of Raynaud extensions of abelian varieties developed in Bosch--L\"utkebohmert \cite{bosch1991degenerating}.
	\subsection{Raynaud extensions}\label{Ray}
	Let $K$ be a discrete non-archimedean valued field, $R$ the valuation ring of $K$. Let $A$ be an abelian variety over $K$ with good or split semi-stable reduction over $R$. First, we discuss the Raynaud extension of $A$. 
	
	Write $S=\text{Spec}(R)$. Let $p: E^{\rm an}\to A^{\rm an}$ be the universal cover, with a point $0\in E^{\rm an}$ such that $p(0)$ is the identity of $A^{\rm an}$. As discussed in \cite{bosch1991degenerating}, there is a unique abelian scheme $\mathcal{B}$ over $S$ with generic fiber $B$, a unique analytic group structure on $E^{\rm an}$ with identity $0$, a split torus $T=\mathbb{G}_m^n$ over $K$, and a lattice $M$ of rank $n$ such that we have exact sequences
	\begin{equation}\label{RE}
		0\longrightarrow T\longrightarrow E\stackrel{q}{\longrightarrow} B\longrightarrow 0,
	\end{equation}
	\begin{equation*}
		0\longrightarrow M\longrightarrow E^{\rm an}\longrightarrow A^{\rm an}\longrightarrow 0.
	\end{equation*}
	
	Let $f_i:\mathbb{G}_m^n\to\mathbb{G}_m$ be the projection to the $i$th factor. Let $E_i$ be the pushout of $E$ by $f_i$, then we have an exact sequence
	\begin{equation*}
		0\longrightarrow\mathbb{G}_m\longrightarrow E_i\longrightarrow B\longrightarrow 0.
	\end{equation*}
	Thus, $E_i$ is a $\mathbb{G}_m$-torsor over $B$, and can be identified as a rigidified line bundle $H_i$ on $B$. The map $E\to E_i$ can be regarded as a trivialization of $q^*E_i$, and we write $e_i$ for this trivialization. Let $\|\cdot\|_{H_i}$ be the canonical metric of $H_i$, which is a model metric since $B$ has good reduction. Now we get a continuous surjective map 
	$${\rm val}: E\longrightarrow\mathbb{R}^n, x\longrightarrow (\log\|e_i(x)\|_{H_i})_{1\leq i\leq n}.$$
	By \cite[Theorem 1.2]{bosch1991degenerating}, $\rm val$ gives a homeomorphism from $M$ to a complete lattice $\Gamma\subset \mathbb{R}^n$. Thus, val induces a continuous surjective map
	$$\overline{\rm val}: A^{\rm an}\longrightarrow \mathbb{R}^n/\Gamma.$$
	
	Following \cite[Example 7.2]{gubler2010non}, there is a map $\iota: \mathbb{R}^n
	/\Gamma\to A^{\rm an}$ such that $\overline{\rm val}\circ\iota$ is the identity on $\mathbb{R}^n/\Gamma$ and that $\overline{\tau}:=\iota\circ\overline{\rm val}$ is a deformation retraction. The image of $\overline{\tau}$, denoted by $S(A)$, is called the canonical skeleton of $A$. As described in \cite[Section 6.5]{berkovich2012spectral}, $\overline{\tau}$ extends to a unique deformation retraction $\tau: E^{\rm an}\to E^{\rm an}$ with image $S(E)=p^{-1}(S(A))$. 
	
	Now we describe line bundles using the uniformization. Let $L$ be an ample symmetric rigidified line bundle on $A$. 
	
	For any $t\in M$, let $\alpha_t$ be the automorphism $x\to x-t$ on $E$. Then $p^*L^{\rm an}$ is equipped with the action $\alpha_t$ by the uniformization $A^{\rm an}=E^{\rm an}/M$. By \cite[Proposition 4.4]{bosch1991degenerating}, there is a rigidified line bundle $H$ on $B$ satisfying $p^*L^{\rm an}=q^*(H^{\rm an})$. We further assume that $L$ can be written as the form $N\otimes[-1]^*N$ for some line bundle $N$ on $A$, then $H$ can be assumed to be symmetric. Then, $q^*(\overline{H}^{\rm an})$ determines a metric on $p^*L^{\rm an}$ where $\overline{H}$ is the canonical metrized line bundle of $H$. Write $\|\cdot\|_{q^*H}$ to be this metric. 
	
	There is a map $z_t: E\to \mathbb{R}$ such that for any $f\in H^0(E^{\rm an}, p^*L^{\rm an})$, $-{\rm log}\|\alpha_t(f)(x+t)\|_{q^*H}=z_t(x)-{\rm log}\|f(x)\|_{q^*H}$. By \cite[Proposition 4.9]{bosch1991degenerating}, $z_t(x)=z_t(\tau(x)))$. A \textit{theta function} is a global section of $p^*L^{\rm an}$ that descends to a global section of $L^{\rm an}$. If $f$ is a theta function, then for all $t\in M$there exists $c(t)\in K$ depending only on $L$ and $t$ such that $\alpha_t(f)=c(t)f$. Thus, we obtain a functional equation (cf. \cite[Propersition 3.13(2)]{foster2018non})
	\begin{equation}\label{T}
		-{\rm log}\|f(x)\|_{q^*H}=-{\rm log}\|f(x+t)\|_{q^*H}-z_t(x)-{\rm log}|c(t)|.
	\end{equation}
	\subsection{Canonical metrics}\label{CMet}
	We use terminologies and assumptions in section \ref{Ray}. Let $s$ be a non-zero rational section of $L$. Through the isomorphism $\varphi_2: [2]^*L\stackrel{\sim}{\to} L^{\otimes 4}$ given by the rigidification of $L$, $h:=s^{\otimes 4} \otimes [2]^*s^{\otimes -1}$ can be identified as a rational function on $A$. Since $\varphi_2$ is an isometry for the canonical metric on $L$, the equality
	\begin{equation}\label{NF}
		-4{\rm log}\|s(x)\|_L+{\rm log}\|s(2x)\|_L=-{\rm log}|h(x)|
	\end{equation}
	holds for all $x\in A^{\rm an}$. 
	
	Write $D={\rm div}_L(s)$. The restriction of $-{\rm log}\|s(x)\|_L$ on $A\setminus {\rm supp}(D)$ is a \textit{Weil function} associated with 
	$D$ in the sense of \cite[pg.255]{lang2013fundamentals}. By \cite[Proposition 11.1.4]{lang2013fundamentals}, the functional equation (\ref{NF}) for ${\rm log}\|s\|_L$ determines a unique Weil function on $A\setminus {\rm supp}(D)$. 
	
	\begin{prop}\label{S}
		Let $s$ be a non-zero rational section of $L$, $f$ the theta function of $s$. Let $y\in E^{\rm an}$, and write $x=p(y)$. Then, the equality 
		\begin{equation*}
			-{\rm log}\|s(x)\|_L+{\rm log}\|s(\overline{\tau}(x))\|_L=-{\rm log}\|f(y)\|_{q^*H}+{\rm log}\|f(\tau(y))\|_{q^*H}
		\end{equation*}
		holds whenever $s$ does not vanish at $x$ {\rm (cf. \cite[Proposition 9.1]{de2022faltings})}. 
	\end{prop}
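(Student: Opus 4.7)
The strategy is to exploit the uniqueness of the canonical metric via the functional equation (\ref{NF}) on both $A$ and its uniformization. I will introduce the auxiliary function
\begin{equation*}
\Phi(y) := -\log\|s(p(y))\|_L + \log\|f(y)\|_{q^*H}, \qquad y \in E^{\rm an},
\end{equation*}
and show that $\Phi - \Phi \circ \tau$ descends to the zero function on $A^{\rm an}$; rearranging this then gives the proposition.

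Since $f$ and $p^*s$ represent the same rational section of $p^*L^{\rm an} = q^*H^{\rm an}$, the singularities of the two terms of $\Phi$ along the support of ${\rm div}(s)$ cancel in local trivializations, so $\Phi$ extends to a continuous function on all of $E^{\rm an}$. Next, applying (\ref{NF}) at $x = p(y)$ and the analogous functional equation for the canonical metric of $H$ on $B$ pulled back via $q$, I would verify
\begin{equation*}
-4\log\|f(y)\|_{q^*H} + \log\|f(2y)\|_{q^*H} = -\log|h(p(y))|.
\end{equation*}
This relies on the identification of $\varphi_2 : [2]^*L \stackrel{\sim}{\to} L^{\otimes 4}$, after pullback by $p$, with the pullback via $q$ of the canonical isomorphism $[2]^*H \stackrel{\sim}{\to} H^{\otimes 4}$ on $B$; both are pinned down by the symmetric rigidifications of $L$ and $H$. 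Subtracting the two functional equations then yields the key identity $\Phi(2y) = 4\Phi(y)$ on $E^{\rm an}$.

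Now set $\Psi(x) := \Phi(y) - \Phi(\tau(y))$ for any $y \in p^{-1}(x)$. To check well-definedness, I use the transformation law (\ref{T}) to compute $\Phi(y+t) - \Phi(y) = -z_t(y) - \log|c(t)|$, and similarly, $M$-equivariance of the lifted retraction $\tau$ together with $\tau \circ \tau = \tau$ gives $\Phi(\tau(y+t)) - \Phi(\tau(y)) = -z_t(\tau(y)) - \log|c(t)|$; since $z_t(y) = z_t(\tau(y))$, these two differences agree, so $\Psi$ is $M$-invariant and descends to a continuous function on the compact space $A^{\rm an}$. Because $\tau$ commutes with multiplication by $2$, applying $\Phi(2y) = 4\Phi(y)$ at both $y$ and $\tau(y)$ yields $\Psi(2x) = 4\Psi(x)$. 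Boundedness of $\Psi$ on $A^{\rm an}$ then forces $\Psi(x) = 4^{-n}\Psi(2^n x) \to 0$, so $\Psi \equiv 0$, which is exactly the claimed equality after rearrangement.

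The main technical obstacle is the identification of the two pullback isomorphisms needed for the functional equation of $\|\cdot\|_{q^*H}$: it requires tracing through how the symmetric rigidification of $L$ matches, via the uniformization, with the canonical rigidification used in defining the canonical metric of $H$ on $B$. A secondary but essential check is the $M$-equivariance of $\tau$ and its compatibility with $[2]$, which should follow from uniqueness of the lift of $\overline{\tau}$ to $E^{\rm an}$ and from the fact that $[2]$ acts as doubling on the cocharacter lattice $\mathbb{R}^n/\Gamma$ and on $B^{\rm an}$.
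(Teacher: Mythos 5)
Your overall architecture (form the difference between the two metrics on $p^*L^{\rm an}=q^*H^{\rm an}$, show it descends, and kill it with the functional equation plus boundedness) is in the same spirit as the paper's proof, and your descent step — using (\ref{T}) together with $z_t=z_t\circ\tau$ and $M$-equivariance of $\tau$ — matches the paper's first step; replacing the appeal to uniqueness of Weil functions by the $4^{-n}$ limit on the compact space $A^{\rm an}$ is also fine. The genuine gap is the identity you call the "main technical obstacle" and do not prove: $-4\log\|f(y)\|_{q^*H}+\log\|f(2y)\|_{q^*H}=-\log|h(p(y))|$, equivalently $\Phi(2y)=4\Phi(y)$. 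What the canonical metric of $H$ actually gives is $-4\log\|f(y)\|_{q^*H}+\log\|f(2y)\|_{q^*H}=-\log|g(y)|$ with $g:=f^{\otimes 4}\otimes[2]^*f^{\otimes-1}$ formed via the pullback of the canonical isomorphism on $B$; your claim amounts to $|g|\equiv|p^*h|$ on $E^{\rm an}$, i.e.\ that the invertible analytic function $u:=g\cdot(p^*h)^{-1}$ has absolute value identically $1$. Matching rigidifications (or even cubical structures) only normalizes $u$ at the identity: on $E^{\rm an}$ there are character-type units whose value at the identity is $1$ but whose absolute value is non-constant (already on the uniformization $\mathbb{G}_m^{\rm an}$ of a Tate curve, $u=x^n$ is such a unit), so "both are pinned down by the symmetric rigidifications" does not close this step. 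To prove $|u|\equiv 1$ one would have to trace the transformation data $z_t$, $c(t)$ (or the theta-group/cubical structure) through the uniformization — essentially redoing part of Bosch--L\"utkebohmert — or add a symmetry argument forcing the discrepancy character to be trivial.

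The paper's proof is arranged precisely to sidestep this: instead of asserting an exact functional equation on $E^{\rm an}$, it compares the candidate function $\Lambda$ only through the combination at $y$ and $\tau(y)$, so the discrepancy enters only via $|u(y)|-$versus$-|u(\tau(y))|$, and the needed input is just $|g^{-1}p^*h|(y)=|g^{-1}p^*h|(\tau(y))$, which is immediate from Berkovich's theorem on invertible analytic functions (since $g$ and $p^*h$ have the same divisor). So your route could be completed, but as written the key step $\Phi(2y)=4\Phi(y)$ is unsupported, and the justification you sketch for it would not suffice.
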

	\begin{proof}
		For any $X\in A^{\rm an}$ choose a lift $y\in E^{\rm an}$. Let $\Lambda(x)=-{\rm log}\|s(\overline{\tau}(x))\|_L-{\rm log}\|f(y)\|_{q^*H}+{\rm log}\|f(\tau(y))\|_{q^*H}$. First, we show that $\Lambda(x)$ does not depend on the choice of $y$. Suppose $y$ and $y'$ are both lifts of $x$ in $E$, then $t:=y'-y$ is in the lattice $M$. By \ref{T}, we have
		\begin{align*}
			-{\rm log}\|f(y)\|_{q^*H}+{\rm log}\|f(y+t)\|_{q^*H}&=-z_t(y)-{\rm log}|c(t)|\\
			&=-z_t(\tau(y))-{\rm log}|c(t)|\\
			&=-{\rm log}\|f(\tau(y))\|_{q^*H}+{\rm log}\|f(\tau(y+t))\|_{q^*H}.
		\end{align*}
		
		Let $D={\rm div}_L(s)$, $h=s^{\otimes 4} \otimes [2]^*s^{\otimes -1}$. Then, $\Lambda(x)$ is a Weil function associated to $D$. By the continuity of $\Lambda$ on the locus where $s$ does not vanish and the density of $A\setminus{\rm supp}(4D-[2]^*D)$ in $A^{\rm an}$, we need only to prove that equation (\ref{NF}) holds for $\Lambda$ on $A\setminus{\rm supp}(4D-[2]^*D)$. That is, to prove that 
		\begin{equation}\label{Lambda}
			4\Lambda(x)-\Lambda(2x)=-{\rm log}|h(x)|
		\end{equation}
		holds for all $x\in A\setminus{\rm supp}(4D-[2]^*D)$.
		
		Let $g(z)=f(z)^4f(2z)^{-1}$, since $\|\cdot\|_H$ is the canonical metric on $B$, we have
		\begin{equation*}
			-4{\rm log}\|f(z)\|_{q^*H}+{\rm log}\|f(2z)\|_{q^*H}=-{\rm log}|g(z)|.
		\end{equation*}
		Since $g$ and $p^*h$ have same zeros and poles, $g^{-1}p^*h$ and $(p^*h)^{-1}g$ are both holomorphic functions on $E^{\rm an}$. By \cite[Theorem 5.2]{berkovich1999smooth}, we conclude that 
		\begin{equation*}
			|g^{-1}(z)(p^*h)(z)|=|g^{-1}(\tau(z))(p^*h)(\tau(z))|. 
		\end{equation*}
		Thus, we have
		\begin{flalign*}
			4\Lambda(x)-\Lambda(2x)=&\ -4{\rm log}\|f(y)\|_{q^*H}+{\rm log}\|f(2y)\|_{q^*H}\\
		 	&\ +4{\rm log}\|f(\tau(y))\|_{q^*H}-{\rm log}\|f(\tau(2y))\|_{q^*H}\\
			&\ -4{\rm log}\|s(\overline{\tau}(x))\|_L+{\rm log}\|s(\overline{\tau}(2x))\|_L\\
			=&\ -{\rm log}|g(y)|+{\rm log}|g(\tau(y))|-{\rm log}|(p^*h)(\tau(y))|\\
			=&\ -{\rm log}|h(x)|.
		\end{flalign*}
	\end{proof}
	
	Next, we study the behavior of $\|s\|_L$ on the canonical skeleton $S(A)$. We assume that the image of the map $K\to \mathbb{R}: x\to{\rm log}|x|$ lies in $\mathbb{Q}$. Recall the skeleton of $S(A)$ is homeomorphic to $\mathbb{R}^n/\Gamma$. Let $\Delta$ be a rational simplex in $\mathbb{R}^n$(i.e. the convex hull of at most $n+1$ rational point of general position), the measure $\delta_{\overline{\Delta}}$ is the pushforward of the Lebesgue measure on $\Delta$ to $\mathbb{R}^n/\Gamma$. 
	
	All piecewise linear in the following of this section is piecewise $|K^*|_{\mathbb{Z}}$-linear in the sense of \cite[Section 1]{berkovich2004smooth}. Following \cite[Section 5]{berkovich2004smooth}, the skeleton of $A^{\rm an}$ is a piecewise linear space. By \cite[Theorem 5.1.1]{berkovich2004smooth}, for any meromorphic function $f$ on $A^{\rm an})$, the morphism $|f|:X^{\rm an}\to\mathbb{R}:x\to|f(x)|$ is a piecewise linear function while restricted to the skeleton $S(A)$. If we assume that ${\rm log}|K^*|\subset\mathbb{Q}$, then the integral of ${\rm log}|f|$ on any rational simplex is a rational number. Thus, we have the following proposition. 
	\begin{prop}\label{MetricSkelton}
		Assume that  ${\rm log}|K^*|\subset\mathbb{Q}$, the integral 
		\begin{equation}\label{Int}
			\int_{x\in\mathbb{R}^n/\Gamma}-{\rm log}\|s(\iota(x))\|_L\delta_{\overline{\Delta}}
		\end{equation}
		is a rational number. 
	\end{prop}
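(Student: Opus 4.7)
The plan is to reduce to the observation recorded in the preceding paragraph: for a meromorphic function $f$ on $A^{\rm an}$, the function $-{\rm log}|f|$ restricted to the skeleton $S(A)$ is $|K^*|_{\mathbb{Z}}$-piecewise linear, and its integral over any rational simplex is rational under $\log|K^*|\subset\mathbb{Q}$. Concretely, I would show that $\lambda(x) := -{\rm log}\|s(\iota(x))\|_L$ is itself a $|K^*|_{\mathbb{Z}}$-piecewise linear function on $\mathbb{R}^n/\Gamma$; then its integral against the Lebesgue pushforward $\delta_{\overline{\Delta}}$ decomposes into a sum of integrals of $\mathbb{Q}$-affine polynomials over $\mathbb{Q}$-rational polytopes, each of which is rational.

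The bulk of the argument is the piecewise-linearity of $\lambda$, for which I would use the theta-function framework of Section \ref{Ray}. Let $f$ be the theta function on $E^{\rm an}$ associated to $s$, identified with $p^*s$ under $p^*L^{\rm an} = q^*H^{\rm an}$. Since $B$ has good reduction, the canonical metric $\|\cdot\|_H$ is a model metric, and so is its pullback $\|\cdot\|_{q^*H}$. Writing $f = g\cdot t$ locally with $g$ meromorphic and $t$ a trivialization of $q^*H$, the function $-{\rm log}|g|$ is $\mathbb{Q}$-piecewise linear on the skeleton $S(E)\cong\mathbb{R}^n$ by \cite[Theorem 5.1.1]{berkovich2004smooth}, and $-{\rm log}\|t\|_{q^*H}$ is $\mathbb{Q}$-PL by the model-metric hypothesis; hence $-{\rm log}\|f\|_{q^*H}$ is $\mathbb{Q}$-piecewise linear on $\mathbb{R}^n$. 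I would then descend from $S(E)$ to $S(A) = \mathbb{R}^n/\Gamma$ by combining Proposition \ref{S} with the transformation law \eqref{T}: the cocycle involves the rational-linear characters $z_t$ and the constants ${\rm log}|c(t)|\in{\rm log}|K^*|\subset\mathbb{Q}$, each producing only $\mathbb{Q}$-affine corrections under descent, so $\lambda$ is $|K^*|_{\mathbb{Z}}$-PL on $\mathbb{R}^n/\Gamma$.

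The main obstacle will be making the descent precise. Proposition \ref{S} becomes tautological on the skeleton itself (both $\overline{\tau}$ and $\tau$ act as the identity there), so one must work first on the universal cover $\mathbb{R}^n\cong S(E)$, where $f$ carries genuine meromorphic information, and only afterwards quotient by $\Gamma$ using the $\mathbb{Q}$-affine cocycle data of \eqref{T}. Once the $\mathbb{Q}$-PL structure of $\lambda$ is secured, the integral \eqref{Int} reduces on each PL piece to the integral of a $\mathbb{Q}$-affine polynomial over a rational simplex, which is plainly rational; any log-type singularities of $\lambda$ along $\iota^{-1}({\rm supp}(D))$ contribute rational amounts as well, since these are integrals of ${\rm log}|g|$-type covered by the meromorphic case recalled above.
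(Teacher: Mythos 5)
Your central claim --- that $\lambda(x)=-\log\|s(\iota(x))\|_L$ is $|K^*|_{\mathbb{Z}}$-piecewise \emph{linear} on $\mathbb{R}^n/\Gamma$ --- is false in general, and this is exactly the difficulty the paper's proof is designed to avoid. The function $-\log\|f\|_{q^*H}$ is indeed piecewise linear on $S(E)\cong\mathbb{R}^n$, but it is only quasi-periodic: by \eqref{T} it transforms under $t\in M$ by the \emph{nonconstant affine} cocycle $z_t(x)+\log|c(t)|$. A function with such a cocycle cannot descend to $\mathbb{R}^n/\Gamma$; the $M$-invariant canonical metric differs from $\|\cdot\|_{q^*H}$ by a factor $e^{-\rho}$ whose correction $\rho$ must satisfy $\rho(x+\gamma_t)-\rho(x)=z_t(x)+\log|c(t)|$ and is therefore \emph{quadratic} in the toric coordinates. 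Consequently $\lambda$ restricted to the skeleton is piecewise polynomial of degree $2$, not piecewise linear, whenever the toric rank is positive: for the Tate curve the restriction of the N\'eron function to the skeleton is the Bernoulli expression $\tfrac12 B_2$, visibly quadratic. Your own remark that Proposition \ref{S} is tautological on the skeleton should have been a warning sign: nothing in Section \ref{Ray} controls $\lambda$ \emph{on} $S(A)$ by purely PL data, and the descent step "each producing only $\mathbb{Q}$-affine corrections" is precisely where the argument breaks.

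The paper never asserts piecewise linearity of the canonical metric. Instead it integrates the doubling functional equation \eqref{NF} against the simplex measures: the right-hand side $-\log|h|$ \emph{is} PL (it is a genuine rational function, so Berkovich's theorem applies), hence those integrals are rational; the unknown integrals $F(\Delta')$ then satisfy a finite linear system $(4I-T)F=(\text{rational vector})$, where $T$ records $[2]_*\delta_{\Delta'}$ with rational coefficients, and a mass argument shows $4I-T$ is invertible over $\mathbb{Q}$. Your route could in principle be repaired --- the integral of a degree-$2$ polynomial with rational coefficients over a rational simplex is still rational, so it would suffice to prove that $\lambda$ is piecewise \emph{quadratic} with rational quadratic part (e.g.\ via an explicit tropical theta function description as in \cite{foster2018non} or \cite{de2022faltings}) --- but that is a substantively different and heavier statement than the one you prove, and as written the proposal has a genuine gap.
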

	\begin{proof}
		Write $d={\rm dim}(\Delta)+1$. Let $H_1, H_2, \ldots, H_d$ be the $(d-2)$-dimensional affine subspaces in $\mathbb{R}^n$ that contain a face of $\Delta$, $H_0$ the $(d-1)$-dimensional affine subspace that contains $\Delta$. For each $0\leq i\leq d$, let $2^tH_i$ be the affine subspace $\{x\in \mathbb{R}^n|2^{-t}x\in H_i\}$, and $2^t\overline{H}_i$ be the image of $2^tH_i$ in $\mathbb{R}^n/\Gamma$. Then, there exist distinct integers $m_i$, $n_i$ such that $2^{m_i} \overline{H}_i = 2^{n_i} \overline{H}_i$. Thus, the set $V:=\{2^t\overline{H}_i|t\in\mathbb{Z}_+, 1\leq i\leq d\}$ is a finite set. Let $\Omega$ be the set of all connected component of $\bigcup\limits_{t=0}^{\infty}2^t\overline{H}_0\setminus\bigcup\limits_{H'\in V}H'$. For each $\Delta'\in\Omega$ with $\Delta'\subset\overline{\Delta}$, the number of fibers of the map $\Delta\to \mathbb{R}^n/\Gamma$ is constant on $\Delta'$. For each $\Delta'\in\Omega$, let $\delta_{\Delta'}$ be the pushforward of the Lebesgue measure of a lift of $\Delta'$ in $\mathbb{R}^n$, then $\delta_{\overline{\Delta}}$ is a linear combination of integral coefficients of $\delta_{\Delta'}$ for $\Delta'\in\Omega$. Also, there exist non-negative rational numbers $r_{\Delta_1,\Delta_2}$ ($\Delta_1,\Delta_2\in\Omega$) such that for $\Delta_0\in\Omega$, $[2]_*\delta_{\Delta_1}=\sum\limits_{\Delta_2\in\Omega}r_{\Delta_1, \Delta_2} \delta_{\Delta_2}$. 
				
		Let $F:\Omega\to\mathbb{R}$ be the map $\Delta'\to \int_{x\in\mathbb{R}^n/\Gamma}-{\rm log}\|s(\iota(x))\|_L\delta_{\Delta'}$. By the functional equation (\ref{NF}), 
		\begin{equation*}
			4\int_{x\in\mathbb{R}^n/\Gamma}-{\rm log}\|s(\iota(x))\|_L\delta_{\Delta'}+\int_{x\in\mathbb{R}^n/\Gamma}{\rm log}\|s(\iota(x))\|_L[2]_*\delta_{\Delta'}=\int_{x\in\mathbb{R}^n/\Gamma}-{\rm log}|f(\iota(x))|\delta_{\Delta'}.
		\end{equation*}
		The right-hand side in the above equation is rational since $-{\rm log}|f|$ is piecewise linear on the skeleton. Thus, we obtain that $4F-TF$ is a rational vector in $\mathbb{R}^{\Omega}$. 
		
		Let $I$ be the identity map on $\mathbb{R}^{\Omega}$, we prove that $4I-T$ is invertible. For any vector $v:=(v_{\Delta_1})$ in $\mathbb{R}^{\Omega}$, let $m_+(v)=\sum\limits_{\Delta_1\in\Omega}m(\delta_{\Delta_1}){\rm max}\{v_{\Delta_1},0\}$ where $m(\delta_{\Delta'})$ is the total mass of $\delta_{\Delta'}$. Note that $m(\delta_{\Delta_1})=\sum\limits_{\Delta_2\in\Omega}r_{\Delta_1, \Delta_2} m(\delta_{\Delta_2})$. Since all coefficient in $T$ are positive, $m_+(Tv)\leq m_+(v)$. This shows that if $4v=Tv$, then $m_+(v)=0$. Also, we have $m_+(-v)=0$. We conclude that $v=0$. Thus, $4I-T$ is invertible. Since $4I-T$ has rational coefficients, so does $(4I-T)^{-1}$. Therefore, $F$ has rational coordinates. The integral (\ref{Int}), being a linear combination of integral coefficient of coordinates of $F$, is a rational number. 
	\end{proof}

	\subsection{Canonical measures}\label{CMS}
	Let $K$ be a discrete non-archimedean valued field, $A$ an abelian variety over $K$, $Z$ a closed subscheme of dimension $d$ on $A$. Let $L$ be a symmetric ample line bundle on $A$, then the canonical metrized line bundle $\overline{L}$ is integrable. For an ample symmetric rigidified line bundle $L$ on $A$, the canonical metrized line bundle $\overline{L}$ is a uniform limit of ample line bundles, and is hence integrable. 
	
	When $A$ has good reduction, $\overline{L}$ has a model metric. By \cite[D\'efinition 2.4]{chambert2006mesures}, the Chambert-Loir measure $c_1(\overline{L})^d\delta_Z:=c_1(\overline{L}|_Z)^d$ supports on finite many points such that the measure of each point a rational number. 
	
	In the general case, the canonical measure is calculated in Gubler \cite{gubler2010non}. Following \cite[Theorem 1.1]{gubler2010non}, the Chambert-Loir measure $c_1(\overline{L})^d\delta_Z$ can be described as follows: there exist rational simplices $\Delta_1, \ldots, \Delta_N$ in $\mathbb{R}^n$ and real numbers $r_1, r_2,\ldots, r_N$ in $\mathbb{R}$ such that 
	\begin{equation}\label{CM}
		\overline{\rm val}_*(c_1(\hat{L})^d\delta_Z) =\sum\limits_{i=1}^Nr_i\delta_{\overline{\Delta}_i}
	\end{equation}
	where $\overline{\Delta}_i$ is the pushforward of the Lebesgue measure on $\Delta_i$ to $\mathbb{R}^n/\Gamma$. 
	
	First, we recall the process in \cite{gubler2010non} for calculating the canonical measure. By de Jong's alteration theorem \cite[Theorem 6.5]{de1996smoothness}, and by possibly take finite field extensions, there is a strictly semi-stable $S$-scheme $\mathcal{X}'$ with generic fiber $X'$, and a dominant proper morphism $f:X'\to X$ such that $f$ is finite on an open subscheme of $X$. \cite[Theorem 5.2]{berkovich1999smooth} constructed the skeleton of $X'$. Here, we describe the canonical measure $c_1(f^*\bar{L})^d$ on $X'$. 
	
	Let ${\rm str}(\mathcal{X}'_s)$ be the strata of the special fiber of $\mathcal{X}'$ (the definition of strata is in \cite[1.2]{gubler2010non}). By \cite[Remark 5.16]{gubler2010non}, there is an open covering $U_S$ ($S\in {\rm str}(\mathcal{X}'_s)$)of $X'$ with the following properties. For each $S\in{\rm str}(\mathcal{X}'_s)$, $U_S$ admits a semi-stable model, and the skeleton of $U_S$, denoted by $\Delta_S$, is a rational simplex contained in the skeleton of $\mathcal{X'}$. Also, the map $U_S\to A$ lifts to a map $\tilde{f}_S: U_S\to E$ to the uniformization of $A$. By \cite[Proposition 5.11] {gubler2010non}, ${\rm val}\circ\tilde{f}_S$ gives an affine map from $\Delta_S$ to $\mathbb{R}^n$. The simplex $\Delta_S$ is called non-degenerate if ${\rm val}\circ\tilde{f}_S$ is injective on $\Delta_S$. Write $\delta_{{\Delta}_S}$ for the Lebesgue measure on $\Delta_S$. As described in \cite[Theorem 6.7]{gubler2010non}, the Chambert-Loir measure $c_1(f^*\bar{L})^d$ is a linear combination of $\delta_{\Delta_S}$. We check that the coefficient of this linear combination is rational. 
	\begin{lem}\label{CLA}
		There are rational numbers $t_S$ ($S\in{\rm str}(\mathcal{X}')_s$) such that $t_S=0$ if $\Delta_S$ non-degenerate and that 
		\begin{equation*}
			c_1(f^*\bar{L})^d=\sum\limits_{S\in {\rm str}(\mathcal{X}'_s)}t_S\delta_{{\Delta}_S}. 
		\end{equation*}
	\end{lem}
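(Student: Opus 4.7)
The plan is to invoke \cite[Theorem 6.7]{gubler2010non}, which already delivers the structural statement that $c_1(f^*\bar{L})^d$ is a real linear combination $\sum_S t_S\delta_{\Delta_S}$ of the Lebesgue measures on the skeleton simplices, with $t_S$ vanishing on the degenerate ones; the remaining task is to verify that each coefficient $t_S$ is in fact rational. I would accomplish this by inspecting Gubler's formula for $t_S$ and checking that each factor appearing there is a rational number.

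Concretely, I would fix a non-degenerate stratum $S$ and recall how Gubler's calculation produces $t_S$. On $U_S$, the canonical metric on $f^*L$ decomposes, via the pullback to the Raynaud uniformization, into the canonical metric $\|\cdot\|_{q^*H}$ on the abelian part $B$ (which is a model metric, since $B$ has good reduction) together with a piecewise linear cocycle built from the trivializations $e_i$ of the $\mathbb{G}_m$-torsors $H_i$ and the theta-function data satisfying (\ref{T}). Expanding $c_1(f^*\bar{L})^d$ in this decomposition and restricting to the skeleton, the part coming from $c_1(q^*H)$ contributes an intersection number on the special fiber of the proper model $\mathcal{B}$, while the tropical pieces contribute a Lebesgue density on $\Delta_S$ read off from the integer linear part of the affine map ${\rm val}\circ\tilde{f}_S:\Delta_S\to\mathbb{R}^n$ provided by \cite[Proposition 5.11]{gubler2010non}.

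Next, I would verify rationality factor by factor. The multinomial coefficient arising from the expansion of $(c_1(q^*\overline{H})+\text{tropical terms})^d$ is an integer. The intersection number on $S$ of the pullbacks of model line bundles from $\mathcal{B}$ is an integer, since $\mathcal{B}$ is proper and the relevant line bundles are honest model line bundles. The remaining factor is a lattice-index determinant attached to the integer linear part of ${\rm val}\circ\tilde{f}_S$; because this map is $|K^*|_{\mathbb{Z}}$-affine in the sense of \cite[Section 1]{berkovich2004smooth} and the valuation satisfies ${\rm log}|K^*|\subset\mathbb{Q}$ under our normalization, this determinant is rational. Multiplying, $t_S\in\mathbb{Q}$; in the degenerate case the Jacobian drops rank and the same factor vanishes, recovering $t_S=0$ automatically.

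The main obstacle is the bookkeeping: Gubler's construction passes through de Jong's alteration, a choice of strictly semi-stable formal model of $U_S$, the Raynaud uniformization, and several tropicalization maps, and one must check that none of these choices introduces an irrational constant. The hypothesis ${\rm log}|K^*|\subset\mathbb{Q}$, which is automatic for our function-field normalization $|f|_v=e^{-[k_v:k]{\rm ord}_v(f)}$, is precisely what rules this out. Once rationality has been confirmed at every step of Gubler's formula, the displayed decomposition becomes a $\mathbb{Q}$-linear combination of Lebesgue measures on rational simplices, as claimed.
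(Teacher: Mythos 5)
Your proposal matches the paper's argument: both rest on citing Gubler's explicit formula \cite[Theorem 6.7]{gubler2010non}, where the coefficient of $\delta_{\Delta_S}$ is $\frac{d!}{(d-e)!}\,{\rm deg}_{\mathcal H}(\overline{S})\,\frac{{\rm vol}(\Lambda_S^L)}{{\rm vol}(\Lambda_S)}$, and observing that the combinatorial factor, the degree, and the covolume ratio of the two complete rational lattices are each rational. Your extra factor-by-factor justification (integrality of the degree, rationality of the lattice index under ${\rm log}|K^*|\subset\mathbb{Q}$) is just a more detailed version of the same one-step verification the paper performs.
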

	\begin{proof}
		By \cite[Theorem 6.7]{gubler2010non}, 
		\begin{equation}\label{Measure}
			c_1(f^*\bar{L})^d=\sum\limits_{S\in {\rm str}(\mathcal{X}'_s)\atop\Delta_S {\rm\text{ is non-degenerate}} }\frac{d!}{(d-e)!}{\rm deg}_\mathcal{H}(\overline{S})\frac{{\rm vol}(\Lambda_S^L)}{{\rm vol}(\Lambda_S)}\delta_{{\Delta}_S}
		\end{equation}
		where $e$ is the dimension of $S$, $\Lambda_S^L$ and $\Lambda_S$ are two complete rational lattices on $\mathbb{R}^{{\rm dim}\Delta_S}$ defined in \cite[6.6]{gubler2010non}, and ${\rm deg}_\mathcal{H}(\overline{S})$ is the degree of a line bundle on $\overline{S}$ defined in \cite[5.17, 6.6]{gubler2010non}. Since all terms in the right-hand side of (\ref{Measure}) are rational numbers, the coefficients $t_S$ are rational numbers. 
	\end{proof}
	
	We now prove that $r_i$ are actually rational numbers. 
	\begin{cor}
		The coefficients $r_1, r_2, \ldots, r_N$ in the equation (\ref{CM}) are all rational numbers. 
	\end{cor}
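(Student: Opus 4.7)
The plan is to transfer the rationality from the alteration $\mathcal{X}'$ (where Lemma \ref{CLA} applies) down to $\mathbb{R}^n/\Gamma$ via the projection formula for Chambert-Loir measures combined with the local lifts $\tilde{f}_S$ to the uniformization. First I would apply the projection formula to the generically finite morphism $f : X' \to Z$ of degree $\deg(f)$ to write
\[
c_1(\hat{L})^d \delta_Z = \frac{1}{\deg(f)}\, f_*\bigl(c_1(f^*\hat{L})^d\bigr),
\]
and then invoke Lemma \ref{CLA} to express $\overline{\mathrm{val}}_*(c_1(\hat{L})^d \delta_Z)$ as a $\mathbb{Q}$-linear combination of the pushforward measures $(\overline{\mathrm{val}} \circ f)_*\delta_{\Delta_S}$, the sum ranging effectively over non-degenerate strata $S$.

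For each such $S$, the fact that $f|_{U_S}$ lifts to $\tilde{f}_S : U_S \to E^{\mathrm{an}}$ makes $(\overline{\mathrm{val}} \circ f)|_{\Delta_S}$ factor as $\Delta_S \xrightarrow{\mathrm{val} \circ \tilde{f}_S} \mathbb{R}^n \to \mathbb{R}^n/\Gamma$. By \cite[Proposition 5.11]{gubler2010non} the map $\mathrm{val} \circ \tilde{f}_S$ is affine with rational linear part and is injective on $\Delta_S$, so its image $\Delta'_S$ is a rational simplex in $\mathbb{R}^n$, and the pushforward $(\mathrm{val} \circ \tilde{f}_S)_*\delta_{\Delta_S}$ is a rational multiple of $\delta_{\Delta'_S}$, the scaling factor being a rational Jacobian determinant computed against the integral lattices $\Lambda_S$ and $\mathbb{Z}^n$. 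Composing with the local homeomorphism $\mathbb{R}^n \to \mathbb{R}^n/\Gamma$ and subdividing $\Delta'_S$ if necessary into finitely many rational subsimplices that each embed injectively into the quotient, $(\overline{\mathrm{val}} \circ f)_*\delta_{\Delta_S}$ becomes an integer linear combination of Lebesgue measures of rational simplices in $\mathbb{R}^n/\Gamma$.

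Summing these contributions with the rational weights $t_S$ furnished by Lemma \ref{CLA} and dividing by the integer $\deg(f)$ writes $\overline{\mathrm{val}}_*(c_1(\hat{L})^d\delta_Z)$ in the form (\ref{CM}) with coefficients in $\mathbb{Q}$, as required. The main delicate point is the book-keeping of Lebesgue normalizations: $\delta_{\Delta_S}$ is defined with respect to the integral lattice $\Lambda_S$ of \cite[6.6]{gubler2010non}, while $\delta_{\overline{\Delta}_i}$ uses the lattice $\Gamma \subset \mathbb{R}^n$, and one must verify that passing between these via the $\mathbb{Q}$-affine maps $\mathrm{val} \circ \tilde{f}_S$ and the $\Gamma$-quotient introduces only rational scaling factors. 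This amounts to a standard determinant computation, but must be carried out carefully against the conventions of \cite{gubler2010non} so that no irrational covolume ratios sneak in.
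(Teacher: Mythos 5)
Your proposal is correct and follows essentially the same route as the paper: projection formula for the generically finite alteration $f$, Lemma \ref{CLA} to reduce to the non-degenerate strata, and the observation that ${\rm val}\circ\tilde{f}_S$ is an injective rational-affine map on $\Delta_S$, so $(\overline{\rm val}\circ f)_*\delta_{\Delta_S}$ is a rational multiple of the Lebesgue measure of a rational simplex in $\mathbb{R}^n/\Gamma$. Your explicit attention to the lattice normalizations of the Lebesgue measures is a more careful rendering of a point the paper leaves implicit, but it is the same argument.
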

	\begin{proof}
		By lemma \ref{CLA} and the projection formula \cite[Propri\'et\'es 2.8]{chambert2006mesures}, 
		\begin{equation*}
			\overline{\rm val}_*(c_1(\hat{L})^d\delta_Z)=\sum\limits_{S\in {\rm str}(\mathcal{X}'_s)}{\rm deg}(f)t_S({\rm \overline{val}}\circ f)_*\delta_{{\Delta}_S}. 
		\end{equation*}
		Now fix a $S\in{\rm str}(\mathcal{X}'_s)$ with $\Delta_S$ non-degenerate. Since ${\rm val}\circ \tilde{f}_S$ is an injective affine map on $\Delta_S$, the image of $\Delta_S$ is a simplex in $\mathbb{R}^n$, denoted by $\Delta'_S$. Then $({\rm \overline{val}}\circ f)_*\delta_{{\Delta}_S}$ is a rational number times $\delta_{\overline{\Delta}_S}$. The lemma follows. 
		
	\end{proof}

	\section{Proof of the main theorem}\label{PF}
	Let $K$ be the function field of a normal projective curve $S$ over $k$, $A$ an abelian variety over $K$, $L$ an ample symmetric rigidified line bundle on $A$. Then, we have the following theorem. 
	\begin{thm}{\rm (Theorem \ref{MT})}
		For any closed subvariety $Z$ in $A$, the N\'eron--Tate height $\hat{h}_L(Z)$ is a rational number. 
	\end{thm}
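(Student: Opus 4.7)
The plan is to proceed by induction on $d := {\rm dim}(Z)$. For the base case $d = 0$, the subvariety $Z$ is a closed point and its N\'eron--Tate height is rational by the classical results of \cite[Th\'eor\`eme 4]{neron1965quasi} and \cite[III.3.3]{moret1985pinceaux}. Throughout the argument we may (and do) assume $L = N \otimes [-1]^*N$ for some line bundle $N$: replacing $L$ by $L^{\otimes 2}$ multiplies $\hat{h}_L(Z)$ by a positive rational, so this substitution is harmless, and it secures the normalization required by the uniformization developed in Section \ref{Ray}.

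Assume $d \geq 1$ and that the theorem is known in all dimensions less than $d$. Since $\hat{h}_L(Z) = \langle\hat{L}^{d+1}|Z\rangle/((d+1){\rm deg}_L(Z))$, it suffices to prove $\langle\hat{L}^{d+1}|Z\rangle \in \mathbb{Q}$. Choose a non-zero rational section $s$ of $L$ with $Z \not\subset {\rm div}(s)$, and apply the Chambert-Loir--Thuillier induction formula (\ref{Induction}):
\begin{equation*}
\langle\hat{L}^{d+1}|Z\rangle = \langle\hat{L}^d|Z\cap{\rm div}(s)\rangle + \sum_{v\in S}\int_{Z_v^{\rm an}} -{\rm log}\|s\|_{\hat{L}}\, c_1(\hat{L})_v^d\, \delta_{Z_v}.
\end{equation*}
The cycle $Z \cap {\rm div}(s)$ is an integer combination of subvarieties $Z_i$ of dimension $d-1$, and each contribution $\langle\hat{L}^d|Z_i\rangle = d\,{\rm deg}_L(Z_i)\,\hat{h}_L(Z_i)$ lies in $\mathbb{Q}$ by the inductive hypothesis. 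The sum over $v \in S$ is finite by the discussion after (\ref{Induction}), so the task reduces to showing that each local integral is rational.

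Fix $v \in S$. By the normalization of $|\cdot|_v$ we have ${\rm log}|K_v^*| \subset \mathbb{Z}$, and after a finite extension of $K_v$ (which only rescales the integral by a rational factor) we may further assume that $A$ has good or split semi-stable reduction, so the machinery of Sections \ref{Ray}, \ref{CMet}, and \ref{CMS} applies. The Chambert-Loir measure $c_1(\hat{L})_v^d\delta_{Z_v}$ is supported on the canonical skeleton $\iota(\mathbb{R}^n/\Gamma) \subset A_v^{\rm an}$, and by the corollary at the end of Section \ref{CMS} its pushforward is $\overline{{\rm val}}_*(c_1(\hat{L})^d\delta_{Z_v}) = \sum_i r_i \delta_{\overline{\Delta}_i}$ with $r_i \in \mathbb{Q}$. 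Since $\overline{{\rm val}} \circ \iota = {\rm id}$, the local integral becomes
\begin{equation*}
\sum_i r_i \int_{\mathbb{R}^n/\Gamma} -{\rm log}\|s(\iota(x))\|_L\, \delta_{\overline{\Delta}_i},
\end{equation*}
and each summand is rational by Proposition \ref{MetricSkelton}.

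The core obstacle, already overcome by the groundwork of Section \ref{A}, is that when $A$ has genuinely bad reduction the canonical metric on $\hat{L}$ is not a model metric, so piecewise linearity of $-{\rm log}\|s\|_L$ on the skeleton cannot be invoked directly. Proposition \ref{MetricSkelton} circumvents this by exploiting the functional equation (\ref{NF}) on a finite family of rational simplices stable under multiplication-by-$2$ and inverting $4I - T$ over $\mathbb{Q}$ to extract rationality. The remaining technicalities---verifying $Z \not\subset {\rm div}(s)$ at each inductive layer, replacing $L$ by a harmless power when enough sections are required, and the field extension needed to reach semi-stable reduction---are routine.
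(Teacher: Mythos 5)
There is a genuine gap in your treatment of the local integrals, and it sits exactly where the paper's Proposition \ref{S} does its work. You assert that the Chambert-Loir measure $c_1(\hat{L})_v^d\delta_{Z_v}$ is supported on the canonical skeleton $\iota(\mathbb{R}^n/\Gamma)\subset A_v^{\rm an}$ and then rewrite $\int_{Z_v^{\rm an}}-\log\|s\|_{\hat{L}}\,c_1(\hat{L})_v^d\delta_{Z_v}$ as $\sum_i r_i\int_{\mathbb{R}^n/\Gamma}-\log\|s(\iota(x))\|_L\,\delta_{\overline{\Delta}_i}$ using $\overline{\rm val}\circ\iota={\rm id}$. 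But the measure is supported on $Z_v^{\rm an}$, and for a proper closed subvariety $Z\subsetneq A$ the canonical skeleton $S(A)$ typically misses $Z_v^{\rm an}$ entirely (points of $S(A)$ are multiplicative norms with trivial kernel, hence cannot kill the ideal of $Z$); Gubler's theorem, quoted as equation (\ref{CM}), only describes the pushforward $\overline{\rm val}_*$ of the measure, not its support. Consequently your substitution computes $\int -\log\|s(\overline{\tau}(x))\|_L\,d\mu$ (since $\iota\circ\overline{\rm val}=\overline{\tau}$), not $\int -\log\|s(x)\|_{\hat L}\,d\mu$, and these differ because the canonical metric is not a pullback along the retraction. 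The whole difficulty of the theorem is to control the discrepancy $-\log\|s(x)\|_L+\log\|s(\overline{\tau}(x))\|_L$, which your argument silently sets to zero.

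The paper closes this gap by working on the semistable alteration $X'$ of $Z_v$: by Lemma \ref{CLA} and the projection formula the measure is a rational combination of $f_*\delta_{\Delta_S}$, and Proposition \ref{S} expresses the discrepancy above as $-\log\|f(y)\|_{q^*H}+\log\|f(\tau(y))\|_{q^*H}$ for the theta function $f$ of $s$, where $\|\cdot\|_{q^*H}$ is a model metric; these correction terms are piecewise linear on the rational simplices $\Delta_S$, hence integrate to rational numbers, while the remaining term $-\log\|s(\overline{\tau}(\cdot))\|_L$ is handled by Proposition \ref{MetricSkelton} applied to the affine pushforward $(\overline{\rm val}\circ f)_*\delta_{\Delta_S}$. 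Your global framework (induction on dimension, induction formula (\ref{Induction}), finiteness of the sum over places, rationality of the coefficients $r_i$) matches the paper, but without the Proposition \ref{S} decomposition, or some substitute controlling $\|s\|_{\hat L}$ off the skeleton of $A$, the rationality of each local integral does not follow.
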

	\begin{proof}
	Write $d={\rm dim}(Z)$. By taking field extension of $K$, we assume that $A$ has good or split semi-stable reductions at all places. Possibly replace $L$ by $L^{\otimes2}$ so that the line bundle there exist a line bundle $N$ with $L=N\otimes N^{\otimes 1}$. Fix a rational section $s$ of $L$ such that the support of ${\rm div}(s)$ does not contain $Z$. By the induction formula (\ref{Induction}), we need to prove that 
	\begin{equation*}
		\sum\limits_{v\in S}\int_{Z_v^{\rm an}}-{\rm log}\|s\|_{\hat{L}} c_1(\hat{L})_v^d \delta_{Z_v}
	\end{equation*}
	is a rational number. Recall the right-hand side of the above equation is a finite sum. 
	
	
	Fix $v\in S$. Let $g$ be the theta function of $s$ on the uniformization of $A^{\rm an}_v$. Consider the semistable alteration $X'$ of $Z_v$. Fix $S\in{\rm str(\mathcal{X}')}$. Let $U_S$, $\Delta_S$, $\tilde{f}_S$ as in section \ref{CMS}. Then, $\tilde{f}_S^*g$ restricts to a piecewise linear function on $\Delta_S$. When $\Delta_S$ is non-degenerate, by proposition \ref{S}, the equation
	\begin{align*}
		\int_{{X'}^{\rm an}}-{\rm log}\|f^*s(x)\|_{\hat{L}} \delta_{\Delta_S}=\ &\int_{X'^{\rm an}}-{\rm log} \|s(\overline{\tau}(f(x))\|_{\hat{L}} \delta_{{\Delta}_S}\\
		&+\int_{X'^{\rm an}}-{\rm log}\|{\tilde{f}}_S^*g(x)\|_{q^*H} \delta_{{\Delta}_S}\\
		&-\int_{X'^{\rm an}}-{\rm log}\|g(\tau(\tilde{f}_S(x)))\|_{q^*H} \delta_{{\Delta}_S}
	\end{align*}
	holds. The first term in the right-hand side is a rational number by proposition \ref{MetricSkelton}. Note that the metric $\|\cdot\|_{q^*H}$ is a model metric, and hence the map $x\to-{\rm log}\|g(x)\|_{q^*H}$ is piecewise linear on the skeleton. The second and third term in the right-hand side are rational numbers since they are integrals of piecewise linear functions on rational simplices. Thus, the right-hand side is also a rational number. 
	
	On the other hand, by lemma \ref{CLA}, the measure $c_1(\hat{L})_v^d \delta_{Z_v}$
	is a linear combination of rational coefficient of measures $f_*(\delta_{{\Delta}_S})$. The theorem follows. 
	\end{proof}
	\bibliographystyle{alpha}
	\bibliography{References}

\begin{thebibliography}{FRSS18}

\bibitem[Ber99]{berkovich1999smooth}
Vladimir~G Berkovich.
\newblock Smooth p-adic analytic spaces are locally contractible.
\newblock {\em Inventiones mathematicae}, 137(1):1, 1999.

\bibitem[Ber04]{berkovich2004smooth}
Vladimir~G Berkovich.
\newblock Smooth p-adic analytic spaces are locally contractible. ii.
\newblock {\em Geometric aspects of Dwork theory}, 1:293--370, 2004.

\bibitem[Ber12]{berkovich2012spectral}
Vladimir~G Berkovich.
\newblock {\em Spectral theory and analytic geometry over non-Archimedean
  fields}.
\newblock Number~33. American Mathematical Soc., 2012.

\bibitem[BL91]{bosch1991degenerating}
Siegfried Bosch and Werner L{\"u}tkebohmert.
\newblock Degenerating abelian varieties.
\newblock {\em Topology}, 30(4):653--698, 1991.

\bibitem[CL06]{chambert2006mesures}
Antoine Chambert-Loir.
\newblock Mesures et {\'e}quidistribution sur les espaces de berkovich.
\newblock {\em Journal f{\"u}r die reine und angewandte Mathematik},
  595:215--235, 2006.

\bibitem[CLT09]{chambert2009mesures}
Antoine Chambert-Loir and Amaury Thuillier.
\newblock Mesures de mahler et {\'e}quidistribution logarithmique.
\newblock In {\em Annales de l'Institut Fourier}, volume~59, pages 977--1014,
  2009.

\bibitem[dJ96]{de1996smoothness}
Aise~Johan de~Jong.
\newblock Smoothness, semi-stability and alterations.
\newblock {\em Publications Math{\'e}matiques de l'IH{\'E}S}, 83:51--93, 1996.

\bibitem[dJ18]{de2018neron}
Robin de~Jong.
\newblock N{\'e}ron--tate heights of cycles on jacobians.
\newblock {\em Journal of Algebraic Geometry}, 27:339--381, 2018.

\bibitem[dJS22]{de2022faltings}
Robin de~Jong and Farbod Shokrieh.
\newblock Faltings height and n{\'e}ron--tate height of a theta divisor.
\newblock {\em Compositio Mathematica}, 158(1):1--32, 2022.

\bibitem[FRSS18]{foster2018non}
Tyler Foster, Joseph Rabinoff, Farbod Shokrieh, and Alejandro Soto.
\newblock Non-archimedean and tropical theta functions.
\newblock {\em Mathematische Annalen}, 372:891--914, 2018.

\bibitem[Gub10]{gubler2010non}
Walter Gubler.
\newblock Non-archimedean canonical measures on abelian varieties.
\newblock {\em Compositio Mathematica}, 146(3):683--730, 2010.

\bibitem[Lan13]{lang2013fundamentals}
Serge Lang.
\newblock {\em Fundamentals of Diophantine geometry}.
\newblock Springer Science \& Business Media, 2013.

\bibitem[MB85]{moret1985pinceaux}
Laurent Moret-Bailly.
\newblock Pinceaux de vari{\'e}t{\'e}s ab{\'e}liennes.
\newblock {\em Ast{\'e}risque}, 129, 1985.

\bibitem[N{\'e}r65]{neron1965quasi}
Andr{\'e} N{\'e}ron.
\newblock Quasi-fonctions et hauteurs sur les vari{\'e}t{\'e}s ab{\'e}liennes.
\newblock {\em Annals of Mathematics}, pages 249--331, 1965.

\bibitem[XY22]{xie2022geometric}
Junyi Xie and Xinyi Yuan.
\newblock Geometric bogomolov conjecture in arbitrary characteristics.
\newblock {\em Inventiones mathematicae}, 229(2):607--637, 2022.

\bibitem[YZ21]{yuan2021adelic}
Xinyi Yuan and Shou-Wu Zhang.
\newblock Adelic line bundles over quasi-projective varieties.
\newblock {\em arXiv preprint arXiv:2105.13587}, 2021.

\bibitem[Zha95]{zhang1995small}
Shouwu Zhang.
\newblock Small points and adelic metrics.
\newblock {\em Journal of Algebraic Geometry}, 4(2):281--300, 1995.

\end{thebibliography}
\end{document}